
\documentclass[letterpaper, 10 pt, conference]{ieeeconf}  

\IEEEoverridecommandlockouts                              

\overrideIEEEmargins                                      


\usepackage{graphicx}
\usepackage{graphics} 
\usepackage{amsmath} 
\usepackage{amssymb}  
\usepackage{amsfonts}
\usepackage[ruled,vlined]{algorithm2e}
\usepackage{pseudocode}
\usepackage{tabu}
\usepackage{color}
\usepackage{cite}

\usepackage{enumitem}
\usepackage{epsfig}
\usepackage{mathrsfs}
\usepackage{array}
\usepackage{amsfonts,booktabs}
\usepackage{lipsum,amsmath,multicol}
\usepackage{graphicx}
\usepackage{subfigure}
\usepackage{times}
\usepackage{pdfsync}
\usepackage{pgfplots}
\usepackage{pgfplotstable}
\usepackage[subnum]{cases}
\usepackage{filecontents}
\usepackage{multirow}
\usepackage{float}
\usepackage{setspace}
\usepackage{mathtools}

\newtheorem{theorem}{\bf Theorem}

\newtheorem{remark}{\bf Remark}
\newtheorem{definition}{\bf Definition}

\newtheorem{lemma}{\bf Lemma}




\newcommand{\beq}{\begin{equation}}
\newcommand{\eeq}{\end{equation}}
\newcommand{\beqa}{\begin{eqnarray}}
\newcommand{\eeqa}{\end{eqnarray}}

\newcommand{\paren}[1]{\left(#1\right)}

\newcommand{\abs}[1]{\left|#1\right|} 
\newcommand{\I}[1]{\ensuremath{\mathsf{1}{\left\{#1\right\}}}} 
\newcommand{\PRP}[1]{\ensuremath{\mathsf{Pr}\left(#1\right)}} 
\newcommand{\ES}[1]{\ensuremath{\mathbb{E}\left[#1 \right]}} 

\newcommand{\logp}[1]{\ensuremath{\log\paren{#1}}}

\title{\LARGE \bf
Linearly Solvable Mean-Field Road Traffic Games
}

\author{
 \and Takashi Tanaka$^{1}$ \and Ehsan Nekouei$^{2}$\and 
Karl Henrik Johansson$^{3}$ 
\thanks{
$^{1}$Department of Aerospace Engineering and Engineering Mechanics, University of Texas at Austin, TX, USA.
        {\tt\small ttanaka@utexas.edu}. 
$^{2}$School of Electrical Engineering and Computer Science, KTH Royal Institute of Technology, Stockholm, Sweden.
        {\tt\small nekouei@kth.se}. 
$^{3}$School of Electrical Engineering and Computer Science, KTH Royal Institute of Technology, Stockholm, Sweden.
       {\tt\small kallej@kth.se}.}
}

\begin{document}

\maketitle
\thispagestyle{empty}
\pagestyle{empty}

\begin{abstract}
We analyze the behavior of a large number of strategic drivers traveling over an urban traffic network using the mean-field game framework. 
We assume an incentive mechanism for congestion mitigation under which each driver selecting a particular route is charged a tax penalty that is affine in the logarithm of the number of agents selecting the same route. 
We show that the mean-field approximation of such a large-population dynamic game leads to the so-called linearly solvable Markov decision process, implying that an open-loop $\epsilon$-Nash equilibrium of the original  game can be found simply by solving a finite-dimensional linear system.
\end{abstract}
%
%
\section{Introduction}
A reasonable approach to obtain a macroscopic model of a road traffic network is to use a game-theoretic analysis (e.g., Wardrop \cite{wardrop1952some}) with assumptions that (i) the number of players involved in the game is large, (ii) each individual  player's impact on the network is infinitesimal, and (iii) players' identities are indistinguishable \cite{patriksson2015traffic}.
Dynamic games with such assumptions are broadly known as \emph{mean-field games} (MFG), and have been actively studied in recent years \cite{caines2013mean, lasry2007mean}. 
The central result in the MFG theory shows that the game theoretic equilibria (e.g., the Markov perfect equilibria) of the original large-population game can be well-approximated by the  solution (called \emph{mean-field equilibrium}, MFE) to the pair of the backward Hamilton-Jacobi-Bellman (HJB) equation and the forward Fokker-Planck-Kolmogorov (FPK) equation. 
This result has a significant impact in applications where solving the HJB-FPK system is computationally more tractable than analyzing the equilibria of the original game with large number of players.

Recently, the MFG theory has been applied to the analysis of  traffic systems.
In \cite{CLM2015}, the authors modeled the interaction between drivers on a straight road as a non-cooperative game, and characterized the MFE of this game using an HJB equation and a mass preservation equation.
In \cite{BZP2017}, the authors considered a continuous-time Markov chain to model the aggregate behavior of drivers on a traffic network. They proposed various routing schemes for balancing the traffic flow over the network.  MFG has been applied to pedestrian  crowd dynamics modeling in \cite{lachapelle2011mean,dogbe2010modeling}.

In this paper, we apply the MFG framework to the analysis of an urban transportation network in which individual drivers' dynamics are decoupled from each other, but their cost functions are coupled via the tax penalty/reward mechanism imposed by the Traffic System Operator (TSO). In our context, the tax mechanism provides drivers with incentives to route themselves in such a way that their collective behavior matches the desirable traffic flow pre-calculated by the TSO. 
In particular, we are interested in the log-population type of the tax mechanisms (i.e., the penalty imposed on an individual driver is affine in the logarithm of the number of drivers taking the same route).
Such a tax mechanism is notable, as it renders the mean-field approximation of the original large-population game \emph{linearly solvable} using the result of \cite{todorov2007linearly}.
This offers a tremendous computational advantage over the conventional HJB-FPK characterization of the MFE.
The purpose of this paper is to delineate this observation and to demonstrate its computational advantages using a numerical simulation.

\subsection{Mean-field game theory: Background}

The MFG theory has been introduced by the authors of \cite{caines2013mean, lasry2007mean} and has been applied to the analysis of large-population games appearing in engineering, economic and financial applications. The central subject in the MFG theory is the coupled HJB-FPK system, which has attracted much attention in mathematical and engineering research \cite{lasry2007mean,achdou2010mean}.
The MFE of linear quadratic stochastic games have been extensively studied in the literature, e.g.,  \cite{Huang2010}, \cite{HLW2016}, \cite{MT2017} and references therein. MFGs with non-linear cost function and/or non-linear dynamics are also studied in, e.g., \cite{Huang2012, CLM2015}. 
The MFE of a Markov decision game with a major agent and a large number of minor agents was studied in \cite{Huang2012}, where the players are only coupled via their cost functions. These results were used in \cite{WYTH2014} to design decentralized security defense decisions in a mobile ad hoc network.
The authors of \cite{TH2011} studied the MFE of a dynamic stochastic game wherein the  dynamic of each agent is described by a (non-linear) stochastic difference equation, and agents are coupled via both dynamics and cost functions.
The authors in \cite{BTB2016} considered a stochastic dynamic game in which the dynamics and cost function of each agent is affected by its individual disturbance term. They studied the existence of a robust (minimax) MFE. In \cite{ZTB2011}, the authors analyzed the MFE of a hybrid stochastic game in which the dynamics of agents are affected by continuous disturbance terms as well as random switching signals.
Risk-sensitive MFG was considered in  \cite{tembine2014risk}.
While continuous-time continuous-state models are commonly used in the references above, the MFG in discrete-time and/or discrete-state regime have been  considered in, e.g., 
\cite{jovanovic1988anonymous, weintraub2006oblivious,gomes2010discrete}.

\subsection{Contribution of this paper}
In this paper, we apply the MFG theory to study the strategic behavior of infinitesimal drivers traveling over an urban traffic network. We consider a dynamic stochastic game wherein, at each intersection, each driver randomly selects one of the outgoing links as her next destination according to her randomized policy.
The objective function of each driver consists of the cost of taking different links at different time-steps as well as a tax penalty/incentive term computed by the TSO. At a given time, the tax associated with a particular link depends on the log-population of drivers traveling on that link.
Our problem set up is motivated by the \emph{mechanism design} problem in which the TSO's objective is to keep the empirical distribution of agents over the links at each time as close as possible to a target distribution. 

As the main technical contribution of this paper, we prove that the MFE of the game described above is given by the solution to a linearly solvable MDP. We emphasize that the MFE in our setting can be computed by performing a sequence of matrix multiplications backward in time \emph{only once}, without any need of forward-in-time computations. This is a stark contrast to the standard MFG results where there is a need to solve a forward-backward HJB-FPK system, which is often a non-trivial task \cite{achdou2010mean}. To highlight this computational advantage, we restrict ourselves to the discrete-time, discrete-space setting. Our result is different from \cite{gomes2010discrete} although the entropy-like cost considered there is similar to the  Kullback-Leibler cost that appears in our analysis. The game considered in \cite{gomes2010discrete} involves only a fixed number of players (which can be thought of as routing policies at intersections rather than infinitesimal drivers) and no mean-field limit is considered.

\section{Problem Formulation}
\label{secprob}
Let the directed graph $\mathcal{G}=(\mathcal{V},\mathcal{E})$ (referred to as the \emph{traffic graph}) represent the topology of the underlying traffic network, where $\mathcal{V}=\{1, 2, ... , V\}$ is the set of nodes (intersections) and $\mathcal{E}=\{1, 2, ... , E \}$ is the set of directed edges (links). For each $i\in \mathcal{V}$, denote by $\mathcal{V}(i) \subseteq \mathcal{V}$ the set of intersections to which there is a directed link from the intersection $i$.   
Let $\mathcal{N}=\{1, 2, ... , N\}$ be the set of players (drivers) on the graph $\mathcal{G}$. 
At any given time step $t\in \mathcal{T}=\{0, 1, ... , T-1\}$, each player is located at an intersection.
The node  at which the $n$-th player is located at time step $t$ is denoted by $i_{n,t}\in \mathcal{V}$. At every time step, player $n$ selects an \emph{action} $j_{n,t}\in \mathcal{V}(i_{n,t})$, which represents her next destination.
By selecting $j_{n,t}$ at time $t$, the player $n$ moves to the node $j_{n,t}$ at time $t+1$ deterministically (i.e., $i_{n,t+1}=j_{n,t}$).
Let $P_0=\{P_0^i\}_{i \in\mathcal{V}}$ be a probability mass function over $\mathcal{V}$. 
At $t=0$, we assume $i_{n,0}$ for each $n \in\mathcal{N}$ is realized independently with probability distribution  $P_0$.

\subsection{Players' strategies}
Each player traverses on $\mathcal{G}$ according to her individual randomized policy. More precisely, for each $n\in\mathcal{N}$, $i\in\mathcal{V}$ and $t\in\mathcal{T}$, let 
$Q_{n,t}^i=\{Q_{n,t}^{ij}\}_{j\in \mathcal{V}(i)}$ be the decision policy of player $n$ at the intersection $i$ at time $t$, representing a probability distribution according to which she selects the next destination $j \in \mathcal{V}(i)$.
We consider the collection $Q_{n,t}=\{Q_{n,t}^i\}_{i\in \mathcal{V}}$ of such probability distributions as the strategy of player $n$ at time $t$.
Namely, for each $n\in \mathcal{N}$ and $t\in \mathcal{T}$, we have $Q_{n,t}\in \mathcal{Q}$, where
\[
\mathcal{Q}=\left\{\{Q^{ij}\}_{i\in\mathcal{V}, j \in \mathcal{V}(i)}:\begin{array}{l}Q^{ij}\geq 0 \;\; \forall i\in \mathcal{V}, j\in\mathcal{V}(i) \\
\sum_{j\in\mathcal{V}(i)} Q^{ij}=1 \;\; \forall i\in \mathcal{V} \end{array}\right\}
\]
is the space of strategies. As clarified below, in this paper we consider a game with the open-loop information pattern \cite{basar1999dynamic}.

If the strategy $\{Q_{n,t}\}_{t\in\mathcal{T}}$ of player $n$ is fixed, then the probability distribution $P_{n,t}=\{P_{n,t}^i\}_{i\in \mathcal{V}}$ of her location at time $t$ is recursively computed by
\begin{equation}
\label{eqdyn}
P_{n,t+1}^j=\sum_i P_{n,t}^i Q_{n,t}^{ij} \;\; \forall t\in\mathcal{T}, j \in\mathcal{V}
\end{equation}
with the initial condition $P_{n,0}=P_0$.
If $(i_{n,t}, j_{n,t})$ is the location-action pair of player $n$ at time $t$, it has a joint distribution $P_{n,t}^i Q_{n,t}^{ij}$, and it is statistically independent of $(i_{m,t}, j_{m,t})$ with $m\neq n$.

\subsection{Action costs}
For each $i\in\mathcal{V}$, $j\in\mathcal{V}(i)$ and $t\in\mathcal{T}$, let $C_t^{ij}$ be a given constant that represents the cost (e.g., fuel cost) incurred to each agent who takes action $j$ at location $i$ at time $t$. We will also introduce the terminal cost $C_T^i$, $i\in\mathcal{V}$ for each player arriving at state $i$ at the final time step $t=T$.

\subsection{Tax mechanisms and incentives}
We assume that players are also subject to individual and time-varying tax penalties calculated by the TSO. Individual tax values depend not only on the players' locations and actions, but also on how the entire population is distributed over the traffic graph $\mathcal{G}$. Specifically, we consider the following log-population tax mechanism, where the tax charged to player $n$ taking action $j$ at location $i$ at time $t$ is
\begin{equation}
\label{eqtax}
\pi_{N,t,n}^{ij}=  \alpha  \left( \log \frac{K_{N,t}^{ij}}{K_{N,t}^i }-\log R_t^{ij} \right).
\end{equation}
In \eqref{eqtax}, $\alpha>0$ is a fixed constant. The parameter $R_t^{ij}> 0$ is also a fixed constant satisfying
$\sum_j R_t^{ij}=1$ for each $i$. $R_t^{ij}$ can be interpreted as the reference policy (state transition probability) designated by the TSO in advance. $K_{N,t}^i$ is the number of agents  (including player $n$) who are located at the intersection $i$ at time $t$, and $K_{N,t}^{ij}$ is the number of agents (including player $n$) who takes the action $j$ at the intersection $i$ at time $t$.
The tax rule \eqref{eqtax} indicates that agent $n$ receives a positive payment by taking action $j$ at location $i$ at time $t$ if $K_{N,t}^{ij}/K_{N,t}^{i} < R_t^{ij}$, while she is penalized by doing so if $K_{N,t}^{ij}/K_{N,t}^{i} > R_t^{ij}$. Since $K_{N,t}^{i}$ and $K_{N,t}^{ij}$ are random variables, $\pi_{N,t,n}^{ij}$ is also a random variable.
We assume that the TSO is able to observe $K_{N,t}^{i}$ and $K_{N,t}^{ij}$ at every time step and hence $\pi_{N,t,n}^{ij}$ is computable.\footnote{Whenever $\pi_{N,t,n}^{ij}$ is computed, we have both $K_{N,t}^{ij}\geq 1$ and $K_{N,t}^{i}\geq 1$ since at least player $n$ herself is counted. Hence \eqref{eqtax} is well-defined.}

Since player $n$'s probability of taking action $j$ at location $i$ at time step $t$ is given by $P_{n,t}^iQ_{n,t}^{ij}$, the total number $K_{N,t}^{ij}$ of such players follows the Poisson binomial distribution
\[
\text{Pr}(K_{N,t}^{ij}=k)=\sum_{A\in F_k}\prod_{n\in A} P_{n,t}^iQ_{n,t}^{ij}\!\!\prod_{n^c\in A^c}\!\!(1-P_{n^c,t}^iQ_{n^c, t}^{ij}).
\]
Here, $F_k$ is the set of all subsets of size $k$ that can be selected from $\mathcal{N}=\{1,2, ..., N\}$, and $A^c=F_k\backslash A$. Similarly, the distribution of $K_{N,t}^{i}$ is given by
\[
\text{Pr}(K_{N,t}^{i}=k)=\sum_{A\in F_k}\prod_{n\in A} P_{n,t}^i\!\!\prod_{n^c\in A^c}\!\!(1-P_{n^c,t}^i).
\]
Notice also that the conditional probability distribution of $K_{N,t}^{ij}$ given player $n$'s location-action pair $(i_{n,t},j_{n,t})=(i,j)$ is
\begin{align}
&\text{Pr}(K_{N,t}^{ij} =k+1 \mid i_{n,t}=i, j_{n,t}=j) \nonumber \\
&=\sum_{A\in F_k^{-n}}\prod_{m\in A} P_{m,t}^iQ_{m,t}^{ij}\!\!\prod_{m^c\in A^{-c}}\!\!(1-P_{m^c,t}^iQ_{m^c,t}^{ij}). \label{eqrhadcond}
\end{align}
Here, $F_k^{-n}$ is the set of all subsets of size $k$ that can be selected from $\mathcal{N}\backslash \{n\}$, and $A^{-c}=F_k^{-n}\backslash A$. Similarly, the conditional probability distribution of $K_{N,t}^{i}$ given $i_{n,t}=i$ is
\begin{align}
&\text{Pr}(K_{N,t}^{i} =k+1 \mid i_{n,t}=i) \nonumber \\
&=\sum_{A\in F_k^{-n}}\prod_{m\in A} P_{m,t}^i\!\prod_{m^c\in A^{-c}}\!\!(1-P_{m^c,t}^i). \label{eqrhadcond2}
\end{align}
Therefore, given the prior knowledge that the player $n$'s location-action pair at time $t$ is $(i,j)$, the expectation of her tax penalty $\pi_{N,n,t}^{ij}$  is 
\begin{align}
&\Pi_{N,n,t}^{ij}\triangleq \mathbb{E}\left[\pi_{N,n,t}^{ij} \mid  i_{n,t}=i, j_{n,t}=j \right] \nonumber \\
=&\sum_{k=0}^{N-1} \!\alpha\log\frac{k+1}{N}\!\!\!\!\sum_{A\in F_k^{-n}}\!\prod_{m\in A}\!\! P_{m,t}^iQ_{m,t}^{ij} \!\!\!\! \prod_{m^c\in A^c}\!\!\!(1\!-\!P_{m^c\!, t}^iQ_{m^c\!, t}^{ij}) \nonumber \\
&- \sum_{k=0}^{N-1} \!\alpha\log\frac{k+1}{N}\!\!\!\!\sum_{A\in F_k^{-n}}\!\prod_{m\in A}\!\! P_{m,t}^i \!\!\!\! \prod_{m^c\in A^c}\!\!\!(1\!-\!P_{m^c\!, t}^i) \nonumber \\
&- \alpha\log R_t^{ij}.
\label{eqtijgeneral}
\end{align}
Notice that the quantity \eqref{eqtijgeneral} depends on the strategies $Q_{-n}\triangleq \{Q_m\}_{m \neq n}$, but not on $Q_n$. In other words, $\pi_{N,n,t}^{ij}$ is a random variable whose distribution does not depend on player $n$'s own strategy. This fact will be used in Section~\ref{secmfe}.

\subsection{Road traffic game}
\label{sectrafficgame}
The $N$-player dynamic game considered in this paper is now formulated as follows.
\subsubsection{State dynamics}
We consider the probability distribution $P_{n,t}$ as the state of player $n$ at time $t$, and $Q_{n,t}$ as her control input.
Each individual's state dynamics is governed by \eqref{eqdyn}. Notice that different players' dynamics are decoupled.
\subsubsection{Cost functionals}
The $n$-th player's cost functional is given by
\begin{align*}
&J\left(\{Q_{n,t}\}_{t\in\mathcal{T}}, \{Q_{-n,t}\}_{t\in\mathcal{T}}\right) \\
&=\sum_{t=0}^{T-1} \sum_{i,j} P_{n,t}^i Q_{n,t}^{ij}\left(C_t^{ij}+\Pi_{N,n,t}^{ij}\right)+\sum_i P_{n,T}^i C_T^i.
\end{align*}
Notice that this quantity depends not only on the $n$-th player's own strategy $\{Q_{n,t}\}_{t\in\mathcal{T}}$ but also on the other players' strategies $\{Q_{-n,t}\}_{t\in\mathcal{T}}$ through the term $\Pi_{N,n,t}^{ij}$, whose precise expression is given by \eqref{eqtijgeneral}.

\subsubsection{Information pattern}
Throughout this paper, we restrict our analysis to the open-loop information pattern \cite{basar1999dynamic}.
More precisely, each player $n$ must fix a sequence of strategies $\{Q_{n,t}\}_{t\in\mathcal{T}}$ in advance based only on the public knowledge $\mathcal{G}, \alpha, \mathcal{N}, \mathcal{T}, R_t^{ij}, C_t^{ij}, C_T^i$ and $P_0$. Players are not allowed to update their strategies in real-time based on the observations $\{K_{N,t}^{ij}\}_{t\in\mathcal{T}}$.\footnote{In the future, we will consider a closed-loop implementation in which the open-loop optimization is performed repeatedly over the receding horizon.}

\subsubsection{Solution concept}
We introduce the following equilibrium concepts for the game described above.
\begin{definition}
The $N$-tuple of strategies $\{Q_{n,t}^{\text{NE}}\}_{n\in\mathcal{N}, t\in\mathcal{T}}$ is said to be an (open-loop) \emph{Nash equilibrium} if the following inequality holds for each $n\in \mathcal{N}$ and $\{Q_{n,t}\}_{t\in \mathcal{T}}, Q_{n,t} \in \mathcal{Q}$:
\[
J\left(\{Q_{n,t}\}_{t\in\mathcal{T}}, \{Q_{-n,t}^{\text{NE}}\}_{t\in\mathcal{T}}\right) \geq J\left(\{Q_{n,t}^{\text{NE}}\}_{t\in\mathcal{T}}, \{Q_{-n,t}^{\text{NE}}\}_{t\in\mathcal{T}}\right).
\vspace{1.5ex}
\]
\end{definition}
\begin{definition}
A Nash equilibrium $\{Q_{n,t}^{\text{NE}}\}_{n\in\mathcal{N}, t\in\mathcal{T}}$ is said to be \emph{symmetric} if 
\[\{Q_{1,t}^{\text{NE}}\}_{t\in\mathcal{T}}=\{Q_{2,t}^{\text{NE}}\}_{t\in\mathcal{T}}=\cdots=\{Q_{N,t}^{\text{NE}}\}_{t\in\mathcal{T}}.
\vspace{1.5ex}
\]
\end{definition}
\begin{remark}
The $N$-player game described above is a \emph{symmetric game} in the sense of \cite{cheng2004notes}. Thus, \cite[Theorem~3]{cheng2004notes} is applicable to show that it has a symmetric equilibrium.
\end{remark}
\begin{definition}
A set of strategies $\{Q_{n,t}^{\text{MFE}}\}_{n\in\mathcal{N}, t\in\mathcal{T}}$ is said to be an \emph{MFE} if the following conditions are satisfied.
\begin{itemize}
\item[(a)] It is symmetric, i.e.,  
\[\{Q_{1,t}^{\text{MFE}}\}_{t\in\mathcal{T}}=\{Q_{2,t}^{\text{MFE}}\}_{t\in\mathcal{T}}=\cdots=\{Q_{N,t}^{\text{MFE}}\}_{t\in\mathcal{T}}.\]
\item[(b)] There exists a sequence $\{\epsilon_N\}$ satisfying $\epsilon_N \searrow 0$ as $N\rightarrow \infty$ such that for each $n \in \mathcal{N}=\{1, 2, ... , N\}$ and for each $\{Q_{n,t}\}_{t\in\mathcal{T}}$ with $Q_{n,t} \in \mathcal{Q}$,
\begin{align*}
&J\left(\{Q_{n,t}\}_{t\in\mathcal{T}}, \{Q_{-n,t}^{\text{MFE}}\}_{t\in\mathcal{T}}\right) + \epsilon_N \\
&\hspace{6ex}\geq J\left(\{Q_{n,t}^{\text{MFE}}\}_{t\in\mathcal{T}}, \{Q_{-n,t}^{\text{MFE}}\}_{t\in\mathcal{T}}\right).
\end{align*}
\vspace{1ex}
\end{itemize}
\end{definition}

\section{Linearly Solvable MDPs}
\label{secoptcontrol}
In this section, we introduce an auxiliary optimal control problem that is closely related to the road traffic game introduced in the previous section. The result in this section will serve as a tool to find an MFE in the road traffic game described above.
The main emphasis in this section is that the introduced auxiliary optimal control problem belongs to the class of \emph{linearly-solvable MDPs} \cite{todorov2007linearly,dvijotham2011unified}. This fact provides a tremendous advantage in the computation of mean-field equilibria in the road traffic game. 

For each $t=0, ... , T$, let $P_t$ be the probability distribution over the vertices $\mathcal{V}$ that evolves according to
\[
P_{t+1}^j=\sum_i P_t^i Q_t^{ij} \;\; \forall  j\in\mathcal{V}
\]
with the initial state $P_0$.
We consider $P_t$ as the state of the dynamics and $Q_t$ as the control action in the optimal control problem below.
We assume $C_t^{ij}$, $R_t^{ij}$ for each $t\in\mathcal{T}, i\in\mathcal{I}, j\in\mathcal{I}$, $C_T^i$ for $i\in\mathcal{I}$, and $\alpha$ are given positive constants.
The $T$-step optimal control problem of our interest is:
\begin{equation}
\min_{\{Q_t\}_{t\in\mathcal{T}}}\sum_{t=0}^{T-1}\sum_{i,j}P_t^i Q_t^{ij}\left(C_t^{ij}+\alpha \log\frac{ Q_t^{ij}}{R_t^{ij}} \right)+\sum_i P_T^i C_T^i.
\label{eqoptcontrol2}
\end{equation}
Notice that the logarithmic term in \eqref{eqoptcontrol2} can be written as the Kullback--Leibler divergence from the reference policy $R_t^{ij}$ to the selected policy $Q_t^{ij}$.
For each $t=0, 1, ... , T$, introduce the value function:
\begin{align*}
V_{t}(P_{t}) & \triangleq \\
\min_{\{Q_{\tau}\}_{\tau=t}^{T-1}} &\!\sum_{\tau=t}^{T-1} \sum_{i,j} P_{\tau}^i Q_{\tau}^{ij}\!\left(C_\tau^{ij}+\alpha\log\frac{Q_\tau^{ij}}{R_\tau^{ij}} \right)+\sum_i P_{T}^i C_T^i
\end{align*}
and the associated Bellman equation
\begin{align}
&V_{t}(P_{t}) = \nonumber \\
&\min_{Q_t} \left\{\sum_{i,j} P_{\tau}^i Q_{\tau}^{ij}\!\left(C_\tau^{ij}+\alpha\log\frac{Q_\tau^{ij}}{R_\tau^{ij}} \right)\!+\!V_{t+1}(P_{t+1})\right\}. \label{eqbellmankl}
\end{align}
 The next theorem states that the optimal control problem \eqref{eqoptcontrol2} is \emph{linearly} solvable \cite{todorov2007linearly}.
\begin{theorem}
\label{theo1}
Let $\{\phi_t\}_{t\in\mathcal{T}}$ be the sequence of $V$-dimensional vectors defined by the backward recursion 
\begin{equation}
\label{eqphi}
\phi_t^i=\sum_j R_t^{ij} \exp \left(-\frac{C_t^{ij}}{\alpha}\right)\phi_{t+1}^j \;\; \forall i\in\mathcal{V}=\{1, 2, ... , V\}
\end{equation}
with the terminal condition $\phi_T^i=\exp(-C_T^i/\alpha) \; \forall i$.
Then, for each $t\in\mathcal{T}$ and $P_t$, the value function can be written as
\begin{equation}
V_t(P_t)=-\alpha\sum_i P_t^i \log \phi_t^i. \label{eqvt}
\end{equation}
Moreover, the optimal policy for \eqref{eqoptcontrol2} is given by
\begin{equation}
Q_t^{ij*}=\frac{\phi_{t+1}^j}{\phi_t^i}R_t^{ij}\exp\left(-\frac{C_t^{ij}}{\alpha}\right). \label{eqoptq}
\end{equation}
\end{theorem}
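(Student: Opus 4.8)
The plan is to prove both assertions simultaneously by backward induction on $t$, using \eqref{eqvt} as the ansatz for the value function. Before starting the induction I would first record that every $\phi_t^i$ is strictly positive: this follows by downward induction from the terminal condition $\phi_T^i=\exp(-C_T^i/\alpha)>0$, since the recursion \eqref{eqphi} expresses $\phi_t^i$ as a combination, with strictly positive weights $R_t^{ij}\exp(-C_t^{ij}/\alpha)$, of the strictly positive quantities $\phi_{t+1}^j$. Positivity guarantees that $\log\phi_t^i$ is well defined, so the ansatz \eqref{eqvt} makes sense.

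For the base case $t=T$ the definition gives $V_T(P_T)=\sum_i P_T^i C_T^i$, and since $-\alpha\log\phi_T^i=C_T^i$ by the terminal condition, this equals $-\alpha\sum_i P_T^i\log\phi_T^i$, matching \eqref{eqvt}. For the inductive step I would assume \eqref{eqvt} holds at $t+1$ and substitute it into the Bellman equation \eqref{eqbellmankl}. Using the state dynamics $P_{t+1}^j=\sum_i P_t^i Q_t^{ij}$, the term $V_{t+1}(P_{t+1})=-\alpha\sum_j P_{t+1}^j\log\phi_{t+1}^j$ rewrites as $-\alpha\sum_{i,j}P_t^i Q_t^{ij}\log\phi_{t+1}^j$, so the bracketed expression to be minimized becomes $\sum_{i,j}P_t^i Q_t^{ij}\bigl(C_t^{ij}+\alpha\log(Q_t^{ij}/R_t^{ij})-\alpha\log\phi_{t+1}^j\bigr)$.

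The crux of the argument is to recognize this as a collection of Kullback--Leibler minimizations. Since both the objective and the simplex constraints $\sum_j Q_t^{ij}=1$ decouple across the source node $i$, I would solve each block separately. For fixed $i$, grouping the three logarithmic/cost terms collapses the per-$i$ objective into $\alpha\sum_j Q_t^{ij}\log\frac{Q_t^{ij}}{R_t^{ij}\exp(-C_t^{ij}/\alpha)\phi_{t+1}^j}$. The key observation is that the denominator sums over $j$ to exactly $\phi_t^i$ by the definition \eqref{eqphi}; factoring out this normalizer turns the block into $\alpha\,\mathsf{D}[\,Q_t^{i\cdot}\,\|\,\hat R^{i\cdot}]-\alpha\log\phi_t^i$, where $\hat R^{ij}=R_t^{ij}\exp(-C_t^{ij}/\alpha)\phi_{t+1}^j/\phi_t^i$ is a genuine probability distribution. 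By Gibbs' inequality the divergence is nonnegative and vanishes precisely at $Q_t^{ij}=\hat R^{ij}$, which is exactly the claimed optimal policy \eqref{eqoptq}; the minimum value of the block is therefore $-\alpha\log\phi_t^i$. Weighting by $P_t^i$ and summing over $i$ yields $V_t(P_t)=-\alpha\sum_i P_t^i\log\phi_t^i$, closing the induction and simultaneously establishing \eqref{eqoptq}.

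The main obstacle I anticipate is purely the algebraic repackaging in the inductive step: recognizing that the partition function of the per-node KL problem is precisely $\phi_t^i$, and that the recursion \eqref{eqphi} is exactly the statement that these normalizers propagate backward consistently. Minor care is also needed for source nodes with $P_t^i=0$, where the corresponding block does not affect the value; since the minimizer \eqref{eqoptq} is independent of $P_t$, assigning it there by convention causes no inconsistency.
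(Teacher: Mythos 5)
Your proof is correct and follows essentially the same route as the paper: backward induction on the ansatz \eqref{eqvt}, with the inductive step reducing to a per-node minimization whose optimizer is \eqref{eqoptq}. The only difference is that you explicitly carry out the step the paper dismisses as ``elementary'' --- identifying the per-$i$ block as $\alpha$ times a Kullback--Leibler divergence with partition function $\phi_t^i$ and invoking Gibbs' inequality --- and you add the positivity check on $\phi_t^i$ and the $P_t^i=0$ edge case, both of which are worthwhile clarifications rather than deviations.
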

\begin{proof}
Due to the choice of the terminal condition $\phi_T^i=\exp(-C_T^i/\alpha)$, notice that
\[
V_T(P_T)=\sum_iP_T^i C_T^i = -\alpha\sum_i P_T^i \log \phi_T^i.
\]
Thus, \eqref{eqvt} holds for $t=T$. To complete the proof by backward induction, assume that
\[
V_{t+1}(P_{t+1})=-\alpha\sum_i P_{t+1}^i \log \phi_{t+1}^i
\]
holds for some $0\leq t \leq T-1$. Then, due to the Bellman equation \eqref{eqbellmankl}, we have
\[
V_{t}(P_{t}) =\min_{Q_t} \left\{ \sum_{i,j} P_{\tau}^i Q_{\tau}^{ij}\left(\rho_\tau^{ij}+\alpha\log\frac{Q_\tau^{ij}}{R_\tau^{ij}} \right)\right\}
\]
where $\rho_t^{ij}=C_t^{ij}-\alpha\log \phi_{t+1}^j$ is a constant. It is elementary to show that the minimum is attained by 
\[
Q_t^{ij*}=\frac{R_t^{ij}}{\phi_t}\exp\left(-\frac{\rho_t^{ij}}{\alpha}\right)
\]
from which \eqref{eqoptq} follows. By substitution, the optimal value is shown to be
\[
V_t(P_t)=-\alpha \sum_i P_t^i \log \phi_t^i.
\]
This completes the induction proof.
\end{proof}

We stress that \eqref{eqphi} is linear in $\phi$ and can be computed by matrix multiplications backward in time.

\section{Mean Field Equilibrium}
\label{secmfe}
Let $\{Q_t^*\}_{t\in\mathcal{T}}$ be the control policy obtained in 
 \eqref{eqoptq}, and let $\{P_t^*\}_{t\in\mathcal{T}}$ be defined recursively by
 \[
P_{t+1}^{j*}=\sum_i P_t^{i*} Q_t^{ij*} \;\; \forall  j\in\mathcal{V}.
\]
In this section, we consider the situation in which all players other than $n$ adopt the strategy $\{Q_t^*\}_{t\in\mathcal{T}}$, and then analyze player $n$'s best response.
For each $t\in\mathcal{T}$, the probability that  the $m$-th player ($m \neq n$) is located at $i$ is $P_t^{i*}$.
As before, define $\Pi_{N,n,t}^{ij}$ as the expected value of the tax penalty charged on player $n$ at time $t$ when she takes action $j$ at location $i$. Since players' dynamics over the traffic graph are decoupled, and $\Pi_{N,n,t}^{ij}$ is computed by the population excluding player $n$, $\Pi_{N,n,t}^{ij}$ does not depend on player $n$'s strategy.
Therefore, the best response by player $n$ is characterized by the solution to the following optimal control problem:
\begin{equation}
\label{eqprob_n}
\min_{\{Q_{n,t}\}_{t\in\mathcal{T}}} \sum_{t=0}^{T-1} \sum_{i,j} P_{n,t}^i Q_{n,t}^{ij}\left(C_t^{ij}+\Pi_{N,n,t}^{ij}\right)+\sum_i P_{n,T}^i C_T^i,
\end{equation}
where $\Pi_{N,n,t}^{ij}$ can be considered as a fixed constant.
To evaluate $\Pi_{N,n,t}^{ij}$ when all players other than player $n$ takes the same strategy (i.e., $Q_{m,t}=Q_t^*$ for $m\neq n$), notice that the conditional distributions of $K_{N,t}^{ij}$ and $K_{N,t}^{i}$ given $(i_{n,t}, j_{n,t})=(i,j)$, provided by \eqref{eqrhadcond} and \eqref{eqrhadcond2}, simplify to the binomial distributions
\begin{align*}
&\text{Pr}(K^{ij}_{N,t}=k+1|i_{n,t}=i, j_{n,t}=j)\\
&={N\!-\!1 \choose k}(P_t^{i*}Q_t^{ij*})^k (1-P_t^{i*}Q_t^{ij*})^{N-1-k}\\
&\text{Pr}(K^{i}_{N,t}=k+1|i_{n,t}=i)\\
&={N\!-\!1 \choose k}(P_t^{i*})^k (1-P_t^{i*})^{N-1-k}.
\end{align*}
Thus, the expression \eqref{eqtijgeneral} simplifies to
\begin{align}
&\Pi_{N,n,t}^{ij}=\mathbb{E}\left[\pi_{N,n,t}^{ij} \mid  i_n=i, j_n=j \right] \nonumber \\
&=\!\sum_{k=0}^{N-1}\!\alpha \log\frac{k+1}{N}{N\!-\!1 \choose k}(P_t^{i*}Q_t^{ij*})^k (1\!-\!P_t^{i*}Q_t^{ij*})^{N-1-k} \nonumber \\
&\;\;\;-\!\sum_{k=0}^{N-1}\!\alpha \log\frac{k+1}{N}{N\!-\!1 \choose k}(P_t^{i*})^k (1\!-\!P_t^{i*})^{N-1-k} \nonumber \\
&\;\;\;- \alpha \log R_t^{ij}
 \label{eqtjieq}
\end{align}

\subsection{Optimal solution to \eqref{eqprob_n} when $N \rightarrow \infty$}

Next, we study the asymptotic limit of $\Pi_{N,n,t}^{ij}$ as $N\rightarrow \infty$.
\begin{lemma}
\label{lemlimpi}
Let $\Pi_{N,n,t}^{ij}$ be defined by \eqref{eqtjieq}. If $P_t^{i*}Q_t^{ij*}>0$, then
\[
\lim_{N\rightarrow \infty} \Pi_{N,n,t}^{ij}=\alpha \log \frac{Q_t^{ij*}}{R_t^{ij}}.  
\]
\end{lemma}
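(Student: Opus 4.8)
The plan is to recognize each of the two sums in \eqref{eqtjieq} as the expectation of a (bounded-above) logarithm of a Binomial random variable, and to show that each converges to the logarithm of its success probability. Write $p \triangleq P_t^{i*}Q_t^{ij*}$ and $q \triangleq P_t^{i*}$; since $p>0$ and $Q_t^{ij*}\le 1$, both $p$ and $q=P_t^{i*}\ge p$ are strictly positive. Letting $X_N\sim\mathrm{Binomial}(N-1,p)$, the probabilities $\binom{N-1}{k}p^k(1-p)^{N-1-k}$ are exactly $\mathrm{Pr}(X_N=k)$, so the first sum equals $\alpha\,\mathbb{E}\!\left[\log\frac{X_N+1}{N}\right]$, and similarly the second sum equals $\alpha\,\mathbb{E}\!\left[\log\frac{X_N'+1}{N}\right]$ with $X_N'\sim\mathrm{Binomial}(N-1,q)$. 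Thus the whole lemma reduces to the single claim that for any fixed $r\in(0,1]$ and $Z_N\sim\mathrm{Binomial}(N-1,r)$,
\[
\lim_{N\to\infty}\mathbb{E}\!\left[\log\frac{Z_N+1}{N}\right]=\log r,
\]
applied once with $r=p$ and once with $r=q$. Subtracting the two limits and the constant $\alpha\log R_t^{ij}$ then gives $\alpha\log p-\alpha\log q-\alpha\log R_t^{ij}=\alpha\log\frac{Q_t^{ij*}}{R_t^{ij}}$, which is the desired value.

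To prove the reduced claim I would normalize by setting $Y_N\triangleq\frac{Z_N+1}{Nr}$, so the goal becomes $\mathbb{E}[\log Y_N]\to 0$. Because $Z_N\le N-1$ we have $Y_N\le \frac1r$, hence $\log Y_N\le-\log r$ is bounded above; combined with $\mathbb{E}[Y_N]=\frac{(N-1)r+1}{Nr}\to1$, Jensen's inequality (concavity of $\log$) already yields $\limsup_N\mathbb{E}[\log Y_N]\le\log\mathbb{E}[Y_N]\to 0$. The matching lower bound $\liminf_N\mathbb{E}[\log Y_N]\ge0$ is the crux of the argument. For it I would split the expectation over the typical event $G=\{Z_N\ge (N-1)r/2\}$ and its complement $B$. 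On $G$ one has $Y_N\ge\frac{N-1}{2N}$, so $\log Y_N$ is bounded below by a constant tending to $-\log 2$; since $\log Y_N$ is also bounded above by $-\log r$ and the law of large numbers gives $Y_N\to1$ in probability, the bounded convergence theorem yields $\mathbb{E}[\log Y_N\,\mathbf{1}_G]\to0$.

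The only remaining piece, and the step I expect to be the main obstacle, is the contribution of the atypical event $B$, where the integrand can diverge: the worst case $Z_N=0$ gives $\log\frac{Z_N+1}{N}=-\log N\to-\infty$. The resolution is that this blow-up is only logarithmic in $N$, whereas $\mathrm{Pr}(B)=\mathrm{Pr}\!\left(Z_N\le (N-1)r/2\right)$ decays exponentially by a Chernoff (Hoeffding) bound, say $\mathrm{Pr}(B)\le e^{-c(r)N}$. Since $\log Y_N\ge\log\frac{1}{Nr}=-\log(Nr)$ on $B$, we get $\mathbb{E}[\log Y_N\,\mathbf{1}_B]\ge-\log(Nr)\,\mathrm{Pr}(B)\to0$, so the exponentially small probability dominates the logarithmic growth and no delicate uniform-integrability machinery is required. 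Combining the two pieces gives $\liminf_N\mathbb{E}[\log Y_N]\ge0$, which together with the Jensen upper bound proves $\mathbb{E}[\log Y_N]\to0$ and hence the lemma.
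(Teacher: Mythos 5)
Your proof is correct and follows essentially the same route as the paper's: recognize the two sums as expectations of $\log\bigl((Z_N+1)/N\bigr)$ for binomial variables, get the upper bound from Jensen's inequality, and get the matching lower bound by splitting into a typical event (where the law of large numbers applies) and an atypical event whose exponentially small Hoeffding probability absorbs the $\log N$ blow-up of the integrand. The only cosmetic difference is that you use bounded convergence on the typical event where the paper invokes Slutsky's theorem plus Fatou's lemma with the lower bound $\log\epsilon$; both close the argument identically.
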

\begin{proof}
See Appendix~\ref{app1}.
\end{proof}
Lemma~\ref{lemlimpi} implies that in the limit $N\rightarrow \infty$, the optimal control problem \eqref{eqprob_n} becomes
\begin{equation}
\min_{\{Q_{n,t}\}_{t\in\mathcal{T}}}  \sum_{t=0}^{T-1} \sum_{i,j} \!P_{n,t}^i Q_{n,t}^{ij}\!\left(\!C_t^{ij}\!+\!\alpha\log\frac{Q_t^{ij*}}{R_t^{ij}} \right) +\sum_i P_{n,T}^i C_T^i. \label{eqprob_n_inf}
\end{equation}
Notice that \eqref{eqprob_n_inf} is different from the auxiliary optimal control problem \eqref{eqoptcontrol2} studied in Section~\ref{secoptcontrol} in that the logarithmic term in \eqref{eqprob_n_inf} is a fixed constant that does not depend on the control policy. 
Nevertheless, these two optimal control problems are closely related as we show below.
To solve \eqref{eqprob_n_inf}, we once again apply  dynamic programming. For each $P_{n,t}$, define the value function
\begin{align*}
&V_{n,t}(P_{n,t})  \triangleq \\
&\min_{\{Q_{n,\tau}\}_{\tau=t}^T} \!\sum_{\tau=t}^{T-1} \sum_{i,j} P_{n,\tau}^i Q_{n,\tau}^{ij}\!\left(C_\tau^{ij}\!+\!\alpha\log\frac{Q_\tau^{ij*}}{R_\tau^{ij}} \right)\!+\!\sum_i P_{n,T}^i C_T^i
\end{align*}
The value function satisfies the Bellman equation:
\begin{align}
&V_{n,t}(P_{n,t})= \nonumber \\
&\min_{Q_{n,t}} \left\{ \sum_{i,j}P_{n,t}^i Q_{n,t}^{ij}\!\left(\!C_t^{ij}\!+\!\alpha\log\frac{Q_t^{ij*}}{R_t^{ij}}\right) \!+\! V_{n,t+1}(P_{n,t+1})\right\}. \label{eqbellman3}
\end{align}
The next key lemma shows that $V_{n,t}(\cdot)$ coincide with the value function $V_t(\cdot)$ for \eqref{eqoptcontrol2}. It also shows an interesting property of the optimal control problem \eqref{eqprob_n_inf} that \emph{any feasible control policy is an optimal control policy}. 

\begin{lemma}
\label{lembestresponse}
Let $\{\phi_t\}_{t\in\mathcal{T}}$ be the sequence defined by \eqref{eqphi}.
\begin{itemize}
\item[(a)] For each $t\in\mathcal{T}$ and $P_{n,t}$, we have 
\[
V_{n,t}(P_{n,t})=-\alpha \sum_i P_{n,t}^i \log \phi_t^{i}.
\]
\item[(b)] An arbitrary sequence of control actions $\{Q_{n,t}\}_{t\in\mathcal{T}}$ with $Q_{n,t}\in\mathcal{Q}$ is an optimal solution to \eqref{eqprob_n_inf}.
\end{itemize}
\end{lemma}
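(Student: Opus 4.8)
The plan is to exploit the explicit reference-optimal policy from Theorem~\ref{theo1} to show that, after substitution, the stage cost of \eqref{eqprob_n_inf} telescopes and becomes entirely independent of the control policy $\{Q_{n,t}\}$. The starting observation is that, from \eqref{eqoptq}, one has $\alpha\log\frac{Q_t^{ij*}}{R_t^{ij}} = \alpha\log\frac{\phi_{t+1}^j}{\phi_t^i} - C_t^{ij}$, so that the per-stage running cost collapses to
\[
C_t^{ij}+\alpha\log\frac{Q_t^{ij*}}{R_t^{ij}} = \alpha\bigl(\log\phi_{t+1}^j - \log\phi_t^i\bigr).
\]
The crucial point is that this cost depends only on the location--action pair $(i,j)$ and retains no dependence on the decision variable.

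First I would run the backward induction on the Bellman equation \eqref{eqbellman3}, mirroring the proof of Theorem~\ref{theo1}. The base case $t=T$ is immediate from $\phi_T^i=\exp(-C_T^i/\alpha)$, giving $V_{n,T}(P_{n,T})=\sum_i P_{n,T}^i C_T^i = -\alpha\sum_i P_{n,T}^i\log\phi_T^i$. For the inductive step I would substitute the simplified stage cost together with the hypothesis $V_{n,t+1}(P_{n,t+1})=-\alpha\sum_i P_{n,t+1}^i\log\phi_{t+1}^i$ into the right-hand side of \eqref{eqbellman3}. Using the dynamics $\sum_i P_{n,t}^i Q_{n,t}^{ij}=P_{n,t+1}^j$ from \eqref{eqdyn} to resum the $\log\phi_{t+1}^j$ terms, and the stochasticity constraint $\sum_j Q_{n,t}^{ij}=1$ to resum the $\log\phi_t^i$ terms, the $\phi_{t+1}$ contributions coming from the stage cost and from the value function cancel exactly, leaving $-\alpha\sum_i P_{n,t}^i\log\phi_t^i$ inside the braces.

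The payoff of this cancellation is that the bracketed expression is constant in $Q_{n,t}$, so the minimization in \eqref{eqbellman3} is vacuous: every admissible $Q_{n,t}\in\mathcal{Q}$ attains the same value. This simultaneously establishes part~(a) --- the value function equals $-\alpha\sum_i P_{n,t}^i\log\phi_t^i$, hence coincides with $V_t$ of Theorem~\ref{theo1} --- and part~(b), since any feasible control sequence is then a minimizer. Equivalently, one may bypass the induction by summing the telescoping stage cost directly over $t\in\mathcal{T}$ and combining with the terminal term via $C_T^i=-\alpha\log\phi_T^i$; introducing $g_t=\sum_i P_{n,t}^i\log\phi_t^i$, the intermediate terms cancel in consecutive pairs and the total reduces to the policy-independent value $-\alpha\,g_0$.

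I do not anticipate a genuine obstacle, since the result is essentially an algebraic identity once \eqref{eqoptq} is inserted. The only point requiring care is the bookkeeping of the two distinct summation identities: the state-transition constraint must be applied to the forward index $j$ carried by $\phi_{t+1}$, while the normalization $\sum_j Q_{n,t}^{ij}=1$ must be applied to the index $i$ carried by $\phi_t$. Interchanging these two steps would destroy the telescoping, so I would keep the two sums cleanly separated throughout.
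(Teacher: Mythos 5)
Your proposal is correct and follows essentially the same route as the paper: a backward induction on the Bellman equation \eqref{eqbellman3} in which substituting \eqref{eqoptq} makes the bracketed expression collapse to $-\alpha\log\phi_t^i$, independent of $Q_{n,t}$, which yields (a) and (b) simultaneously. The only cosmetic difference is that you substitute $Q_t^{ij*}$ into the stage cost first and cancel the $\log\phi_{t+1}^j$ terms against the value function, whereas the paper first folds $-\alpha\log\phi_{t+1}^j$ into $\rho_t^{ij}$ and then substitutes; the algebra is identical.
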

\begin{proof}
(a). Proof is by backward induction. If $t=T$, the claim trivially holds due to the definition $V_{n,T}(P_T)=\sum_i P_{n,T}^i C_T^i$ and the fact that the terminal condition for \eqref{eqphi} is given by $\phi_T^{i}=\exp(-C_T^i/\alpha)$. Thus, for $0\leq t \leq T-1$, assume that
\[
V_{n,t+1}(P_{n,t+1})=-\alpha \sum_j P_{n,t+1}^j \log \phi_{t+1}^{j} 
\]
holds.
Using $\rho_t^{ij}=C_t^{ij}-\alpha \log \phi_{t+1}^{j}$, the Bellman equation \eqref{eqbellman3} can be written as 
\begin{equation}
V_{n,t}(P_{n,t})=\min_{Q_{n,t}} \sum_{i,j}P_{n,t}^i Q_{n,t}^{ij}\left(\rho_t^{ij}+\alpha\log\frac{Q_t^{ij*}}{R_t^{ij}}\right). \label{eqbellman4}
\end{equation}
Substituting $Q_t^{ij*}$ obtained by \eqref{eqoptq} into \eqref{eqbellman4}, we have
\begin{subequations}
\label{eqvnt}
\begin{align}
V_{n,t}(P_{n,t}) &=\min_{Q_{n,t}}\sum_{i,j} P_{n,t}^i Q_{n,t}^{ij}\left(-\alpha \log \phi_t^{i}\right) \nonumber \\
&=\min_{Q_{n,t}} \sum_i P_{n,t}^i\left(-\alpha \log \phi_t^{i}\right) \underbrace{\sum\nolimits_j Q_{n,t}^{ij}}_{=1} \\
&=-\alpha \sum_i P_{n,t}^i \log \phi_t^{i}. \label{eqvntb}
\end{align}
\end{subequations}
This completes the proof.

(b). Since the final expression \eqref{eqvntb} does not depend on $Q_{n,t}$, any control action $Q_{n,t}\in \mathcal{Q}$ is a minimizer of the right hand side of the Bellman equation \eqref{eqbellman3}.
\end{proof}

Lemma~\ref{lembestresponse} (b) shows that an arbitrary policy is optimal in the optimal control problem \eqref{eqprob_n_inf}. This result is a reminiscent of the \emph{Wardrop's principle} \cite{wardrop1952some} (see also \cite{correa2011wardrop} and references therein), which states that travel costs are equal on all used routes at the   game-theoretic equilibrium among strategic and infinitesimal travelers. 

\subsection{Mean field equilibrium}
We are now ready to state the main result of this paper. 
The next theorem, together with Theorem~\ref{theo1}, provides a numerical method to compute an MFE of the road traffic game presented in Section~\ref{secprob}.
\begin{theorem}
\label{theomfe}
A symmetric strategy profile $Q_{n,t}^{ij}=Q_t^{ij*}$ for each $n\in\mathcal{N}, t\in \mathcal{T}$ and $i,j\in\mathcal{V}$, where $Q_t^{ij*}$ is obtained by \eqref{eqphi}--\eqref{eqoptq}, is an MFE of the road traffic game.
\end{theorem}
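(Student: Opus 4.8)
The plan is to verify the two defining properties of an MFE in turn. Property (a), symmetry, is immediate: the proposed profile assigns the common policy $Q_t^{ij*}$ to every player by construction. The whole content is therefore property (b), the $\epsilon$-Nash inequality. Fixing every opponent at $\{Q_t^*\}$, the tax $\Pi_{N,n,t}^{ij}$ is the constant \eqref{eqtjieq}, which does not depend on player $n$'s own policy, so her decision problem is exactly the control problem \eqref{eqprob_n}. I must produce a sequence $\epsilon_N\searrow 0$ for which no unilateral deviation lowers her cost by more than $\epsilon_N$.

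The lever is the comparison with the limiting objective $J^\infty$ of \eqref{eqprob_n_inf}, obtained by replacing $\Pi_{N,n,t}^{ij}$ with its limit $\alpha\log(Q_t^{ij*}/R_t^{ij})$. By Lemma~\ref{lembestresponse}, every feasible policy is optimal for \eqref{eqprob_n_inf} and attains the same value $J^\infty=V_{n,0}(P_0)=-\alpha\sum_i P_0^i\log\phi_0^i$, a constant that is independent of the policy chosen. Thus the equilibrium policy and any deviation share one and the same limiting cost, and it suffices to control how far the true finite-population cost $J$ sits from this common constant.

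Writing $\delta_{N,t}^{ij}=\Pi_{N,n,t}^{ij}-\alpha\log(Q_t^{ij*}/R_t^{ij})$, the discrepancy is $J-J^\infty=\sum_{t,i,j}P_{n,t}^iQ_{n,t}^{ij}\,\delta_{N,t}^{ij}$. The structural fact that makes this uniform in the deviation is that $\sum_{i,j}P_{n,t}^iQ_{n,t}^{ij}=\sum_i P_{n,t}^i=1$ for every $t$, so the gap is bounded by $T\max_{i,j,t}|\delta_{N,t}^{ij}|$ irrespective of which policy produced $P_{n,t}$. Setting $\epsilon_N=T\max|\delta_{N,t}^{ij}|$ over the indices with $P_t^{i*}Q_t^{ij*}>0$, Lemma~\ref{lemlimpi} gives $\epsilon_N\to 0$. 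Along the equilibrium path player $n$ visits only nodes with $P_t^{i*}>0$, where the lemma applies, so $J(\{Q_t^*\},\{Q_{-n,t}^{\text{MFE}}\})\le J^\infty+\epsilon_N$; combined with the bound $J\ge J^\infty-\epsilon_N$ for a deviation and the equality of the two limiting values, this gives $J(\{Q_{n,t}\},\{Q_{-n,t}^{\text{MFE}}\})+2\epsilon_N\ge J(\{Q_t^*\},\{Q_{-n,t}^{\text{MFE}}\})$, i.e.\ property (b) with $2\epsilon_N$.

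The step I expect to be delicate is the lower bound $J\ge J^\infty-\epsilon_N$ for deviations, because Lemma~\ref{lemlimpi} fails at any node $i$ with $P_t^{i*}=0$ onto which a deviation might steer player $n$. There the two binomials in \eqref{eqtjieq} degenerate to the single atom $k=0$, giving $\Pi_{N,n,t}^{ij}=-\alpha\log R_t^{ij}$ for all $N$, hence $\delta_{N,t}^{ij}=-\alpha\log Q_t^{ij*}\ge 0$ (note $Q_t^{ij*}\in(0,1]$ is strictly positive, since $R_t^{ij}>0$ and the recursion \eqref{eqphi} keeps $\phi_t^i>0$). This is the benign sign: such boundary terms only inflate the finite-population cost of a deviation relative to its limiting value and therefore cannot help the deviator. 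I would accordingly split the sum into reached and unreached indices, bound the former by the vanishing $\epsilon_N$ via Lemma~\ref{lemlimpi} and drop the latter using $\delta_{N,t}^{ij}\ge 0$, which secures the one-sided estimate needed to close the argument.
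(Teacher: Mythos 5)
Your proof is correct and follows essentially the same route as the paper's: fix the opponents at $Q_t^{ij*}$, compare the finite-$N$ objective with the limiting objective \eqref{eqprob_n_inf} via Lemma~\ref{lemlimpi}, and invoke Lemma~\ref{lembestresponse} to identify the common limiting value attained by every policy. You are in fact more careful than the paper on two points it glosses over: you supply the upper bound $J(Q^*)\le J^\infty+\epsilon_N$ needed to compare a deviation against the equilibrium policy's \emph{finite-$N$} cost (the paper's chain of inequalities only reaches the limiting minimum), and you explicitly treat the indices with $P_t^{i*}=0$ where Lemma~\ref{lemlimpi} does not apply, correctly observing that there $\delta_{N,t}^{ij}=-\alpha\log Q_t^{ij*}\ge 0$ has the benign sign for the one-sided estimate.
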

\begin{proof}
Let the policies $Q_{m,t}^{ij}=Q_t^{ij*}$ for $m \neq n$ be fixed. It is sufficient to show that there exists a sequence $\epsilon_N \searrow 0$ such that the cost of adopting a strategy $Q_{n,t}^{ij}=Q_t^{ij*}$  for player $n$ is no greater than $\epsilon_N $ plus the cost of adopting any other policy.
Since 
\[
\Pi_{N,n,t}^{ij}\rightarrow \alpha \log \frac{Q_t^{ij*}}{R_t^{ij}} \text{ as } N\rightarrow \infty,
\]
there exists a sequence $\delta_N \searrow 0$ such that
\[
\Pi_{N,n,t}^{ij}+\delta_N > \alpha \log \frac{Q_t^{ij*}}{R_t^{ij}} \;\; \forall i, j, t.
\]
Now, for all policy $\{Q_{n,t}\}_{t\in\mathcal{T}}$ of player $n$ and the induced distributions $P_{n,t}^j=\sum_i P_{n,t}^i Q_{n,t}^{ij}$, we have
\begin{align*}
&\sum_{t=0}^{T-1}\sum_{i,j} P_{n,t}^i Q_{n,t}^{ij} \left(C_t^{ij}+\Pi_{N,n,t}^{ij}\right) \\
&>\sum_{t=0}^{T-1}\sum_{i,j} P_{n,t}^i Q_{n,t}^{ij} \left(C_t^{ij}+\alpha \log \frac{Q_t^{ij*}}{R_t^{ij}}- \delta_N \right) \\
&=\sum_{t=0}^{T-1}\sum_{i,j} P_{n,t}^i Q_{n,t}^{ij} \left(C_t^{ij}+\alpha \log \frac{Q_t^{ij*}}{R_t^{ij}}\right)- T V^2 \delta_N\\
&\geq \min_{\{Q_{n,t}\}_{t\in\mathcal{T}}}\sum_{t=0}^{T-1}\sum_{i,j} P_{n,t}^i Q_{n,t}^{ij} \left(C_t^{ij}+\alpha \log \frac{Q_t^{ij*}}{R_t^{ij}}\right) \\
& \hspace{5ex} - T V^2 \delta_N. 
\end{align*}
Notice that the minimization in the last line is attained by adopting $Q_{n,t}^{ij}=Q_t^{ij*}$. Since $\epsilon_N \triangleq T V^2 \delta_N \searrow 0$, this completes the proof.
\end{proof}

\section{Numerical Illustration}
\label{secsimulation}

In this section, we illustrate the result of Theorem~\ref{theomfe} applied to a simple mean-field road traffic game over a traffic graph with 100 nodes (a grid world with obstacles) shown in Fig.~\ref{fig:simulation} and over time horizon $T=70$.
At $t=0$, the population is concentrated in the origin cell (indicated by ``O''). For each player $n$, the terminal cost is given by
$C_T^i=10\sqrt{\text{dist}(i, \text{D})}$, where $\text{dist}(i, \text{D})$ is the Manhattan distance between the player's final location $i$ and the destination cell (indicated by ``D''). For each time step $t$, the action cost for each player is given by
\[
C_t^{ij}=\begin{cases}
0 & \text{ if } j=i\\
1 & \text{ if } j\in\mathcal{V}(i)\\
100000 & \text{  if } j\not\in\mathcal{V}(i) \text{ or } j \text{ is an obstacle.}
\end{cases}
\]
where $\mathcal{V}(i)$ contains the north, east, south, and west neighborhood of the cell \#$i$.
As the reference distribution, we use $R_t^{ij}=1/|\mathcal{V}(i)|$ (uniform distribution) for each $i\in\mathcal{V}$ and $t\in \mathcal{T}$ to incentivize players to spread over the traffic graph.

For various values of $\alpha >0$, the backward formula \eqref{eqphi} is solved and the optimal policy is calculated by \eqref{eqoptq}.
If $\alpha$ is small (e.g., $\alpha=0.1$), it is expected that players will take the shortest path since the action cost is dominant compared to the tax cost \eqref{eqtax}. Numerical results confirm this intuition; three figures in the top row of Fig.~\ref{fig:simulation} show snapshots of the population distribution at time steps $t=20, 35$ and $50$, assuming all the players take the MFE policy obtained by \eqref{eqoptq}. In the bottom row, similar plots are generated with a larger $\alpha$ ($\alpha=1$). In this case, it can be seen that the equilibrium strategy will choose longer paths with higher probability to reduce congestion.

\begin{figure}[t]
    \centering
    \includegraphics[width=\columnwidth]{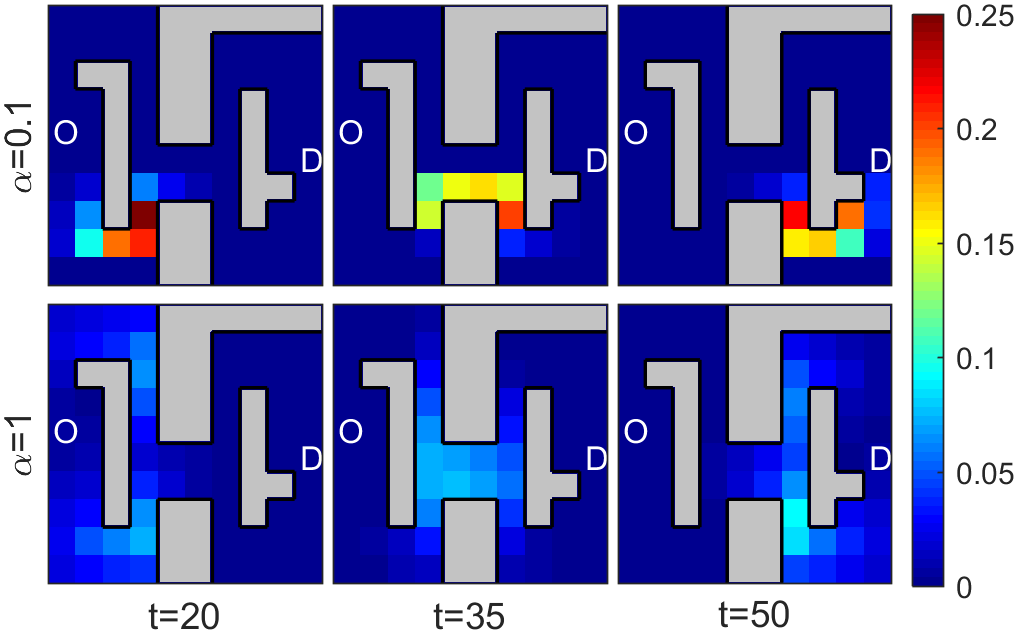}
    \caption{Simulation results for road traffic game at $t=20, 35, 50$ and for $\alpha=0.1$ and $1$.}
    \label{fig:simulation}
\end{figure}

\section{Conclusion and Future Work}
In this paper, we showed that the mean-field approximation of a large-population road traffic game under the log-population tax mechanism can be obtained by the linearly solvable MDP.
This result will serve as the basis for further research in the future. For instance, the close-loop implementation of the considered game (similar to the receding horizon implementation of model predictive control) and the corresponding feedback Nash equilibria are worthwhile to study. How the obtained results in this paper can be used in the mechanism design problems should also be investigated in the future. For instance, in this paper we have not discussed how the reference policy $R_t^{ij}$ should be chosen by the TSO. 

\section*{Acknowledgement}
The authors would like to thank Mr. Matthew T. Morris at the University of Texas at Austin for his contributions to the numerical study in Section~\ref{secsimulation}.

\appendix
\subsection{Proof of Lemma~\ref{lemlimpi}}
\label{app1}

Let $K^i_{N,-n,t}$ denote the number of agents, except agent $n$, which are located at intersection $i$ at time $t$ and let $K^{ij}_{N,-n,t}$ denote the number of agents, except agent $n$, which are located at intersection $i$ at time  $t$ and select intersection $j$ as their next destination. Thus, we have  $K^i_{N,-n,t}=\sum_{l\neq n}\I{i_{l,t}=i}$ and $K^{ij}_{N,-n,t}=\sum_{l\neq n}\I{i_{l,t}=i, j_{l,t}=j}$,
where $\I{\cdot}$ is the indicator function.
Then, $\Pi_{N,n,t}^{ij*}$ can be written as 
\begin{align}
\Pi_{N,n,t}^{ij*}=&\ES{\log\tfrac{1+K^{ij}_{N,-n,t}}{1+K^i_{N,-n,t}}}-\log R_t^{ij}\nonumber\\
=&\ES{\logp{ \tfrac{1+K^{ij}_{N,-n,t}}{N}}}-\nonumber\\
&\ES{\logp{\tfrac{1+K^i_{N,-n,t}}{N}}}-\log R_t^{ij}\nonumber
\end{align}
Using Jensen inequality, we have 
\begin{align}
 \ES{\logp{\tfrac{1+K^{ij}_{N,-n,t}}{N}}}&\leq \logp{\frac{1}{N}+\ES{\tfrac{K^{ij}_{N,-n,t}}{N}}}\nonumber\\
 &\stackrel{(a)}{=}\logp{\frac{1}{N}+\frac{N-1}{N}P_t^{i*}Q_t^{ij*}}\nonumber
\end{align}
where $(a)$ follows from $\ES{\frac{K^{ij}_{N,-n,t}}{N}}=\frac{N-1}{N}P_t^{i*}Q_t^{ij*}$ and the fact that all the agents employ the policy $\left\{Q^{*}_t\right\}$. Thus, we have 
\begin{align}
\limsup_{N\rightarrow\infty}\ES{\log \tfrac{1+K^{ij}_{N,-n,t}}{N}}\leq \log P_t^{i*}Q_t^{ij*}
\end{align}
Next we show the other direction. For $\epsilon\in\left(0,\right.\left.\frac{ P_t^{i*}Q_t^{ij*}}{2}\right]$, we can write $\ES{\log \frac{1+K^{ij}_{N,-n,t}}{N}}$ as
\begin{align}
\ES{\log \tfrac{1+K^{ij}_{N,-n,t}}{N}}=&\ES{\log \tfrac{1+K^{ij}_{N,-n,t}}{N}\I{\tfrac{K^{ij}_{N,-n,t}}{N}>\epsilon}}+\nonumber\\
&\ES{\log \tfrac{1+K^{ij}_{N,-n,t}}{N}\I{\tfrac{K^{ij}_{N,-n,t}}{N}\leq \epsilon}}\nonumber
\end{align}

Using the Hoeffding inequality, it follows that $\frac{K^{ij}_{N,-n,t}}{N}$ converges to $P_t^{i*}Q_t^{ij*}$ in probability as $N$ becomes large. From continues mapping theorem, we have the convergence  of $\log \frac{K^{ij}_{N,-n,t}}{N}$ in probability to $\log P_t^{i*}Q_t^{ij*} $ for $P_t^{i*}Q_t^{ij*}>0$. Similarly, $\I{\frac{K^{ij}_{N,-n,t}}{N}>\epsilon}$ converges to 1 in probability. Thus, from Slutsky's Theorem, we have $\log \frac{1+K^{ij}_{N,-n,t}}{N}\I{\frac{K^{ij}_{N,-n,t}}{N}>\epsilon}$ converges to $P_t^{i*}Q_t^{ij*}$ in distribution. Using Fatou's lemma and the fact that $\log \frac{1+K^{ij}_{N,-n,t}}{N}\I{\frac{K^{ij}_{N,-n,t}}{N}>\epsilon}\geq \log\epsilon $ , we have 
\begin{align}
\liminf_{N\rightarrow\infty} \ES{\log\tfrac{1+K^{ij}_{N,-n,t}}{N}\I{\tfrac{K^{ij}_{N,-n,t}}{N}>\epsilon}}\geq \log P_t^{i*}Q_t^{ij*}\nonumber
\end{align}
We also have 
\begin{align*}
&\abs{\ES{\log \tfrac{1+K^{ij}_{N,-n,t}}{N}\I{\tfrac{K^{ij}_{N,-n,t}}{N}\leq \epsilon}}} \\
&\leq \logp{N}\PRP{\tfrac{K^{ij}_{N,-n,t}}{N}\leq \epsilon}
\end{align*}
Using the Hoeffding inequality, it is straightforward to show that $\PRP{\frac{K^{ij}_{N,-n,t}}{N}\leq \epsilon}$ decays to zero exponentially in $N$ which implies that 
\begin{align}
\lim_{N\rightarrow\infty}\ES{\log \tfrac{1+K^{ij}_{N,-n,t}}{N}\I{\tfrac{K^{ij}_{N,-n,t}}{N}\leq \epsilon}}=0\nonumber
\end{align}
Thus, we have 
\begin{align}
\liminf_{N\rightarrow\infty}\ES{\log \tfrac{1+K^{ij}_{N,-n,t}}{N}}\geq \log P_t^{i*}Q_t^{ij*}
\end{align}
which implies that $\lim_{N\rightarrow\infty}\ES{\logp{ \frac{1+K^{ij}_{N,-n,t}}{N}}}=\log P^{i*}_tQ^{ij*}_t$. Following similar steps, it is straightforward to show that $\lim_{N\rightarrow\infty}\ES{\logp{ \frac{1+K^{i}_{N,-n,t}}{N}}}=\log P^{i*}_t$ which completes the proof.


\bibliographystyle{IEEEtran}
\bibliography{Refs}

\end{document}